\newtheorem{theorem}{Theorem}[section]
\newtheorem{definition}[theorem]{Definition}
\newtheorem{proposition}[theorem]{Proposition}
\newtheorem{corollary}[theorem]{Corollary}
\newtheorem{remark}[theorem]{Remark}
\title{\bf Recent and new results on octonionic Bergman and Szeg\"o kernels} 
\author{
Rolf S\"{o}ren Krau{\ss}har\\
Fachbereich Mathematik\\
Erziehungswissenschaftliche Fakult\"at\\
Universit\"at Erfurt\\
Nordh\"auser Str. 63\\
99089 Erfurt, Germany\\
soeren.krausshar@uni-erfurt.de }
\begin{document}
\maketitle
\begin{abstract}  
	Very recently one has started to study Bergman and Szeg\"o kernels in the setting of octonionic monogenic functions. In particular, explicit formulas for the Bergman kernel for the octonionic unit ball and for the octonionic right half-space as well as a formula for the Szeg\"o kernel for the octonionic unit ball have been established. In this paper we extend this line of investigation by developing explicit formulas for the Szeg\"o kernel of strip domains of the form ${\cal{S}} := \{z \in \mathbb{O} \mid 0 < \Re(z) < d\}$ from which we derive by a limit argument considering $d \to \infty$ the Szeg\"o kernel of the octonionic right half-space. Additionally, we set up formulas for the Bergman kernel of such strip domains and relate both kernels with each other. In fact, these kernel functions can be expressed in terms of one-fold periodic octonionic monogenic generalizations of the cosecant function and the cotangent function, respectively.
	     
\end{abstract}
{\bf Keywords}: octonions, Bergman kernel, Szeg\"o kernel, strip domains, octonionic trigonometric functions \\[0.1cm] 
\noindent {\bf Mathematical Review Classification numbers}: 30G35\\

\section{Introduction}  

Particularly during the last four years one notices a strongly growing interest in octonionic analysis, see for instance \cite{JRS,KO2018,KO2019,Kra2019-1, Kra2019-2, Kra2020-1,Nolder2018,WL2018,WL2020}. 
\par\medskip\par
One possible reason for this boost of interest has been the discovery that  the octonions and their function theory  play a key-role in the description of the symmetries arising in unifying physical models that connect the standard model of particle physics with aspects of supergravity,  see for example \cite{Burdik}.

\par\medskip\par

Already during the 1970s, P. Dentoni and M. Sce started to investigate how to carry over fundamental tools from Clifford analysis to the non-associative octonionic setting, cf. \cite{DS}. 

In fact, in spite of the non-associativity, it turned out to be possible to extend many analogues of classical fundamental theorems from the Clifford analysis setting to the octonionic setting. Key ingredients are for instance a Cauchy integral formula as well as Taylor and Laurent series representation formulas that are composed by octonionic versions of the Fueter polynomials, see for example \cite{Imaeda,Nono,XL2000,XL2001,XL2002,XZL}. Of course, one carefully has to put parenthesis in order to take the non-associative nature into account. 

Although some of these fundamental theorems formally look very similar to those that we have in the associative Clifford algebra setting, Clifford analysis and octononic analysis are really two essentially different function theories. 
Important substantial differences are explained for instance in the recent papers \cite{KO2018,KO2019}.  
Octonionic left monogenic functions do not form a right $\mathbb{O}$-module. There is no general analogue of the Borel-Pompeiu formula, either. 

The fact that one cannot interchange the parenthesis arbitrarily in a product of octonionic expressions often  represents a serious obstacle to carry over standard arguments from the Clifford analysis setting to the octonionic setting. Many times new and different ideas are required.

Very recently, Jinxun Wang and Xingmin Li started to investigate generalizations of Bergman and Hardy spaces in the octonionic setting. The study of reproducing kernel spaces represents a main research area in Clifford analysis since the 1970s, as classical reference see for example \cite{BDS}. 

In \cite{WL2018} the authors introduced an appropriate definition of an inner product that guarantees the existence and the uniqueness of a reproducing octonionic Bergman and Szeg\"o kernel over the unit ball together with explicit formulas for these kernels. The special form of this inner product is a consequence of the non-associative nature; in the associative setting, this inner product coincides with the one that is usually used in Clifford analysis. 

In \cite{WL2020} the authors developed an explicit formula for the octonionic Bergman kernel of the right octonionic half-space defined by $\{z \in \mathbb{O} \mid \Re(z) > 0\}$. To arrive at their representation formula they departed from the formula of the Bergman kernel of the unit ball developed in their preceding paper \cite{WL2018}. The octonionic setting does not offer the possibility to simply apply the Cayley transformation in the argument to directly switch from the setting of the unit ball to the half-space, because this transformation does not preserve the octonionic monogenicity. For this reason, a different approach is required to meet this end. The authors suggest two possible ways, namely by applying a limit argument passing from a sequence of balls to the entire half-space or by using a density argument which is also used in classical harmonic analysis for instance in \cite{Axler}. However, the authors have not presented yet a formula for the octonionic Szeg\"o kernel of the right-half-space. 

In our paper, we continue this line of investigation. We first develop an explicit representation formula for the octonionic Szeg\"o kernel of a strip domain of the form ${\cal{S}} := \{z \in \mathbb{O} \mid 0 < \Re(z) < d\}$ where the thickness parameter $d > 0$ can be chosen arbitrarily. This formula is expressed in terms of a singly-periodic octonionic Eisenstein type series that represents a singly-periodic octonionic monogenic generalization of the cosecant function. Then we take the limit $d \to +\infty$ and obtain an explicit formula for the octonionic Szeg\"o kernel of the right octonionic half-space, filling in the gap of the previous research mentioned above. 

Next we set up an explicit formula for the octonionic Bergman kernel of such a strip domain. Here, we use an  octonionic monogenic generalization of the cotangent function. 

Finally, we present an explicit relation between the octonionic Bergman and Szeg\"o kernel in the context of these strip domains. Like in the Clifford analysis setting, they turn out to arise from partial derivation from each other. In fact, this can be proved by applying two algebraic identities of these generalized trigonometric functions. This paper opens the door to further research in this direction.

 \section{Basic facts of octonions and octonionic function theory}
 
 The octonions represent an eight-dimensional real non-associative normed division algebra over the reals.  
Following classical references, such as \cite{Baez,WarrenDSmith} and others, the octonions can be constructed by the well-known Cayley-Dickson doubling process. This process is initiated by starting from two pairs of complex numbers $(a,b)$ and $(c,d)$. After that one forms an addition and multiplication operation by  $$
 (a,b)+(c,d) :=(a+c,b+d),\quad\quad (a,b)\cdot (c,d) := (ac-d\overline{b},\overline{a}d+cb) 
 $$ 
 where $\overline{\cdot}$ represents the conjugation (anti-)automorphism. Subsequentially, this anti-automorphism is extended by $\overline{(a,b)}:=(\overline{a},-b)$ to the set of pairs $(a,b)$. 
 
 In the first step of this doubling process, one obtains the real Hamiltonian quaternions $\mathbb{H}$. Each quaternion can be represented in the form $z=x_0 + x_ 1e_1 + x_2 e_2 + x_3 e_3$ where $e_i^2=-1$ for $i=1,2,3$ and $e_1 e_2 = e_3$, $e_2 e_3 = e_1$, $e_3 e_1 = e_2$ and $e_i e_j = - e_j e_i$ for all mutually  distinct $i,j$ from $\{1,2,3\}$. $\mathbb{H}$ is not commutative anymore. However, it is still associative.
 
In the next application of this doubling process one also looses the associativity and one deals with the Calyey octonions $\mathbb{O}$. However, in contrast to the associative Clifford algebras, octonions still form a division algebra. In real coordinates octonions can be expressed in the form  
 $$
 z = x_0 + x_1 e_1 + x_2 e_2 + x_3 e_3 + x_4 e_4 + x_5 e_5 + x_6 e_6 + x_7 e_7
 $$
 where $e_4=e_1 e_2$, $e_5=e_1 e_3$, $e_6= e_2 e_3$ and $e_7 = e_4 e_3 = (e_1 e_2) e_3$. 
 Like for quaternions, we also have $e_i^2=-1$ for all $i =1,\ldots,7$ and $e_i e_j = -e_j e_i$ for all mutual distinct $i,j \in \{1,\ldots,7\}$. The way how the octonionic multiplication works is easily visible from the table    
\begin{center}
 \begin{tabular}{|l|rrrrrrr|}
 $\cdot$ & $e_1$&  $e_2$ & $e_3$ & $e_4$ & $e_5$ & $e_6$  & $e_7$ \\ \hline
 $e_1$  &  $-1$ &  $e_4$ & $e_5$ & $-e_2$ &$-e_3$ & $-e_7$ & $e_6$ \\
 $e_2$ &  $-e_4$&   $-1$ & $e_6$ & $e_1$ & $e_7$ & $-e_3$ & $-e_5$ \\
 $e_3$ &  $-e_5$& $-e_6$ & $-1$  & $-e_7$&$e_1$  & $e_2$  & $e_4$ \\
 $e_4$ &  $e_2$ & $-e_1$ & $e_7$ & $-1$  &$-e_6$ & $e_5$  & $-e_3$\\
 $e_5$ &  $e_3$ & $-e_7$ & $-e_1$&  $e_6$&  $-1$ & $-e_4$ & $e_2$ \\
 $e_6$ &  $e_7$ &  $e_3$ & $-e_2$& $-e_5$& $e_4$ & $-1$   & $-e_1$ \\
 $e_7$ & $-e_6$ &  $e_5$ & $-e_4$& $e_3$ & $-e_2$& $e_1$  & $-1$ \\ \hline 	
 \end{tabular}
\end{center}
A particular feature of the octonions is their alternative property and that the fact that they form a composition algebra. 

We have the Moufang rule $(ab)(ca) = a((bc)a)$ holding for all $a,b,c \in \mathbb{O}$. If particularly $c=1$, then one may read off the flexibility condition $(ab)a= a(ba)$.  

Let $a = a_0 + \sum\limits_{i=1}^7 a_i e_i$ be an octonion represented with the seven imaginary units as mentioned above. $a_0$ is called the real part of $a$ and will be denoted by $\Re{a} = a_0$ throughout the whole paper. The conjugation leaves the real part invariant but it changes the minus signs of the imaginary units, i.e.  $\overline{e_j}=-e_j$ for all $j =1,\ldots,7$. Two general octonions $a,b \in \mathbb{O}$ always satisfy $\overline{a\cdot b} = \overline{b}\cdot \overline{a}$. 

The Euclidean norm and the Euclidean scalar product from $\mathbb{R}^8$ can be expressed in the octonionic context in terms of $\langle a,b \rangle := \sum\limits_{i=0}^7 a_i b_i = \Re\{a \overline{b}\}$ and $|a|:= \sqrt{\langle a,a\rangle} = \sqrt{\sum\limits_{i=0}^7 a_i^2}$. The norm composition property $|a\cdot b|=|a|\cdot|b|$ holds for all $a,b \in \mathbb{O}$. Every non-zero element $a \in \mathbb{O}$ is invertible with $a^{-1} =\overline{a}/|a|^2$.  

Another important rule that we need in this paper is the identity 
\begin{equation}\label{dieckmann}
(a\overline{b})b = \overline{b}(ba) =a(\overline{b}b)=a(b \overline{b})
\end{equation}  
which is true for all $a,b \in \mathbb{O}$ and, 
$\Re\{b(\overline{a}a)c\} =\Re\{(b \overline{a})(ac)\}$ for all $a,b,c \in \mathbb{O}$. This is explicitly proved for instance in \cite{CDieckmann} Proposition 1.6. 

We also use the notation $B_8(z,r) :=\{z \in \mathbb{O} \mid |z| < r\}$ and $\overline{B_8(z,r)} :=\{z \in \mathbb{O} \mid |z| \le r\}$ for the eight-dimensional solid open (resp.  closed) ball of radius $r$ in the octonions. By $S_7(z,r)$ we mean the seven-dimensional sphere $S_7(z,r) :=\{z \in \mathbb{O} \mid |z| = r\}$. If $z=0$ and $r=1$ then we denote the unit ball and the unit sphere by $B_8$ and $S_7$, respectively. The notation $\partial B_8(z,r)$ means the same as $S_7(z,r)$.   
 
\par\medskip\par

After having recalled the most important algebraic properties we now turn to summarize the basic analytic properties and notions in order to make the paper self-contained.  
 
From \cite{Imaeda,XL2000} and elsewhere we recall 
\begin{definition}
	Let $U \subseteq \mathbb{O}$ be open. A real differentiable function $f:U \to \mathbb{O}$ is called left (right) octonionic monogenic or equivalently left (right) ${\mathbb{O}}$-regular for short if it satisfies ${\cal{D}} f = 0$ or $f {\cal{D}} = 0$. Here $
	{\cal{D}}:= \frac{\partial }{\partial x_0} + \sum\limits_{i=1}^7 e_i \frac{\partial }{\partial x_i}$ denotes the octonionic Cauchy-Riemann operator, where $e_i$ are the octonionic units introduced above. 
\end{definition}
As already mentioned in the Introduction, the set of left (right) ${\mathbb{O}}$-regular functions does not form an ${\mathbb{O}}$-right (left) module. \cite{KO2019} presents a clear counter-example. Simply consider the function $f(z):= x_1 - x_2 e_4$. One has ${\cal{D}}[f(z)] = e_1 - e_2 e_4 = e_1 - e_1 = 0$. However, $g(z):=(f(z))\cdot e_3 = (x_1 - x_2 e_4) e_3 = x_1 e_3 - x_2 e_7$ satisfies ${\cal{D}}[g(z)] = e_1 e_3 - e_2 e_7 = e_5 -(-e_5) = 2 e_5 \neq 0$. The lack of associativity destroys the modular structure of ${\mathbb{O}}$-regular functions. This feature represents one significant difference to Clifford analysis. 

Luckily, the composition with an arbitrary translation of the form $z \mapsto z + \omega$ where $\omega \in \mathbb{O}$ still preserves monogenicity --- also in the octonionic case. We have ${\cal{D}}f(z+\omega) = 0$ if and only if ${\cal{D}}f (z) = 0$. This allows us to meaningfully introduce ${\cal{O}}$-regular generalizations of the cotangent and cosecant functions in terms of periodic Eisenstein series. This will be needed in the sequel of this paper. 
 
Like in the associative case, also ${\mathbb{O}}$-regular functions satisfy the following Cauchy integral theorem, as proved for instance in \cite{XL2002}. 
 \begin{proposition}\label{cauchy} (Cauchy's integral theorem)\\
Let $G \subseteq \mathbb{O}$ be a bounded $8$-dimensional connected star-like domain with an orientable strongly Lipschitz boundary $\partial G$. Let $f \in C^1(\overline{G},\mathbb{O})$. If $f$ is left (resp.) right $\mathbb{O}$-regular inside of $G$, then 
$$
\int\limits_{\partial G} d\sigma(z) f(z) = 0,\quad {\rm resp.}\;\;\int\limits_{\partial G} f(z) d\sigma(z) = 0
$$  	
where $d\sigma(z) = \sum\limits_{i=0}^7 (-1)^j e_i \stackrel{\wedge}{d x_i} = n(z) dS(z)$, where $\stackrel{\wedge}{dx_i} = dx_0 \wedge dx_1 \wedge \cdots dx_{i-1} \wedge dx_{i+1} \cdots \wedge dx_7$ and where $n(z)$ is the outward directed unit normal field at $z \in \partial G$ and $dS(z) =|d \sigma(z)|$ the ordinary scalar surface Lebesgue measure of the $7$-dimensional boundary surface.  
 \end{proposition}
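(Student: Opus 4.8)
The plan is to reduce the assertion to the classical Gauss--Stokes theorem, exactly as in associative Clifford analysis; the point that makes this transfer work over $\mathbb{O}$ is that the integrand $d\sigma(z)\,f(z)$ involves only a \emph{single} octonionic product, so no bracketing ambiguity ever arises and the non-associativity of $\mathbb{O}$ is harmless here.

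First I would view $\omega:=d\sigma(z)\,f(z)=\sum_{i=0}^{7}(-1)^{i}\,\bigl(e_i\,f(z)\bigr)\,\widehat{dx_i}$ as an $\mathbb{O}$-valued differential $7$-form on $\overline{G}$, whose coefficient in front of $\widehat{dx_i}$ is the octonion $(-1)^i(e_i f(z))$, computed by one left multiplication. Since $f\in C^1(\overline{G},\mathbb{O})$ and the $e_i$ are constant, $d(e_i f)=\sum_{j=0}^{7}e_i\frac{\partial f}{\partial x_j}\,dx_j$, hence $d\omega=\sum_{i=0}^{7}\sum_{j=0}^{7}(-1)^{i}\,e_i\frac{\partial f}{\partial x_j}\;dx_j\wedge\widehat{dx_i}$. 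Using $dx_j\wedge\widehat{dx_i}=0$ for $j\neq i$ and $dx_i\wedge\widehat{dx_i}=(-1)^{i}\,dV$ with $dV:=dx_0\wedge\cdots\wedge dx_7$, the double sum collapses to $d\omega=\sum_{i=0}^{7}e_i\frac{\partial f}{\partial x_i}\,dV=({\cal D}f)(z)\,dV$, where I used $e_0=1$. Stokes' theorem on $G$ then gives $\int_{\partial G}d\sigma(z)\,f(z)=\int_{\partial G}\omega=\int_{G}d\omega=\int_{G}({\cal D}f)(z)\,dV=0$, the last equality because $f$ is left $\mathbb{O}$-regular in $G$; writing $d\sigma(z)=n(z)\,dS(z)$ recovers the stated form. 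The right-regular case is the same computation with the two octonionic factors interchanged, which yields $d\bigl(f(z)\,d\sigma(z)\bigr)=(f{\cal D})(z)\,dV$ and hence the conclusion.

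I expect the only genuinely delicate point to be the analytic justification of Stokes' theorem, since $\partial G$ is merely strongly Lipschitz, so the smooth version does not apply verbatim. One clean option is to invoke a Gauss--Green formula valid for bounded Lipschitz domains. Alternatively --- and this is presumably why star-likeness is assumed --- after a translation one may take $G$ star-like with respect to $0$, approximate $\partial G$ from inside by smooth star-like hypersurfaces $\partial G_\varepsilon$ (obtained for instance by mollifying and slightly shrinking the Lipschitz radial function defining $\partial G$) with $\overline{G_\varepsilon}\subset G$ and $G_\varepsilon\uparrow G$, apply the classical Stokes theorem on each $G_\varepsilon$ to get $\int_{\partial G_\varepsilon}d\sigma\,f=\int_{G_\varepsilon}({\cal D}f)\,dV=0$, and then let $\varepsilon\to 0$; the passage to the limit is legitimate because ${\cal D}f$ is continuous on $\overline{G}$ (indeed $\equiv 0$ there) and $f$ together with its first-order derivatives is uniformly controlled on $\overline{G}$. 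Apart from this regularity technicality the argument is just the sign bookkeeping of the associative theory, and no new octonionic ideas are needed beyond the observation made at the outset that only one product appears.
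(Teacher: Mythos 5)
Your argument is correct, and it is the standard proof: the paper itself does not prove this proposition but quotes it from the cited reference [Xing-min Li and Li-Zhong Peng, \emph{Bull.\ Belg.\ Math.\ Soc.} (2002)], where essentially this Stokes-theorem computation is carried out. Your sign bookkeeping ($dx_i\wedge\widehat{dx_i}=(-1)^i\,dV$ cancelling the $(-1)^i$ in $d\sigma$, which by the way corrects the paper's typo $(-1)^j$), the key observation that only a single octonionic product occurs so non-associativity is irrelevant, and the approximation handling of the merely Lipschitz boundary are all sound.
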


Another essential difference to the associative setting consists in the fact that in contrast to quaternionic and Clifford analysis, octonionic analysis does {\em not} offer an analogy of a general Borel-Pompeiu formula, neither of the form 
 	$$
 	\int\limits_{\partial G} g(z) \cdot (d\sigma(z) \cdot f(z)) = 0 \quad {\rm nor}\quad \int\limits_{\partial G} (g(z) \cdot d\sigma(z)) \cdot f(z) = 0.
 	$$
 	Such a formula is not even true if $g$ is right $\mathbb{O}$-regular and $f$ left $\mathbb{O}$-regular, independently how we bracket these terms together. The lack of such an identity is already mentioned in the classical reference \cite{GTBook}.  It is a consequence of the lack of associativity. 
	
	However, if one of these functions is the Cauchy kernel $q_{\bf 0}: \mathbb{O} \backslash\{0\} \to \mathbb{O},\;q_{\bf 0}(z) := \frac{x_0 - x_1 e_1 - \cdots - x_7 e_7}{(x_0^2+x_1^2+\cdots + x_7^2)^4} = \frac{\overline{z}}{|z|^8}$ , then one obtains a generalization. 

 For convenience we recall from \cite{Imaeda,Nono,XL2002}:
 \begin{proposition}\label{cauchy1}(Cauchy' integral formula)\.\
Let $U \subseteq \mathbb{O}$ be a non-empty open set and $G \subseteq U$ be an $8$-dimensional compact oriented manifold with a strongly Lipschitz boundary $\partial G$. If $f: U \to \mathbb{O}$ is left (resp. right) $\mathbb{O}$-regular, then for all $z \not\in \partial G$
$$
\chi(z)f(z)= \frac{3}{\pi^4} \int\limits_{\partial G} q_{\bf 0}(w-z) \Big(d\sigma(w) f(w)\Big),\quad\quad \chi(z) f(z)= \frac{3}{\pi^4} \int\limits_{\partial G}   \Big(f(w)d\sigma(w)\Big) q_{\bf 0}(w-z),
$$
where $\chi(z) = 1$ if $z$ is in the interior of $G$ and $\chi(z)=0$ if $z$ in the exterior of $G$. 
 \end{proposition}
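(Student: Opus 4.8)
\emph{Proof proposal.} The plan is to transcribe the classical Clifford-analytic argument via Stokes' theorem, paying attention only at the single point where non-associativity could interfere. As a preliminary one checks that the Cauchy kernel $q_{\bf 0}(z)=\overline{z}/|z|^{8}$ is both left and right $\mathbb{O}$-regular on $\mathbb{O}\setminus\{0\}$: writing $q_{\bf 0}=-\tfrac16\,\overline{\cal D}\,|z|^{-6}$ and using ${\cal D}\,\overline{\cal D}=\overline{\cal D}\,{\cal D}=\Delta$ together with the harmonicity of $|z|^{-6}$ off the origin, this is a short direct computation. Now fix $z\notin\partial G$ and, for $w$ in a neighbourhood of $\overline{G}\setminus\{z\}$, introduce the $\mathbb{O}$-valued $7$-form
$$
\omega(w):=q_{\bf 0}(w-z)\,\bigl(d\sigma(w)\,f(w)\bigr)=\sum_{i=0}^{7}(-1)^{i}\,q_{\bf 0}(w-z)\,\bigl(e_{i}f(w)\bigr)\,\widehat{dx_{i}},\qquad e_{0}:=1.
$$
The heart of the matter is the claim that $d\omega=0$ on $\mathbb{O}\setminus\{z\}$; granting this, the two cases of the statement follow by the familiar excision trick.

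To establish $d\omega=0$ I would differentiate termwise. With the orientation convention $(-1)^{i}\,dx_{i}\wedge\widehat{dx_{i}}=dV$ one obtains
$$
d\omega=\sum_{i=0}^{7}\partial_{x_{i}}\bigl[q_{\bf 0}(w-z)\,(e_{i}f(w))\bigr]\,dV=\Bigl(\sum_{i=0}^{7}(\partial_{x_{i}}q_{\bf 0})\,(e_{i}f)+q_{\bf 0}\bigl(\textstyle\sum_{i=0}^{7}e_{i}\,\partial_{x_{i}}f\bigr)\Bigr)\,dV.
$$
The last summand equals $q_{\bf 0}({\cal D}f)=0$ because $f$ is left $\mathbb{O}$-regular. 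The first sum is the obstacle: in the associative setting one would reassociate it to $\bigl(\sum_{i}(\partial_{x_{i}}q_{\bf 0})e_{i}\bigr)f=(q_{\bf 0}{\cal D})f=0$, but that step is illegitimate here. Instead I would substitute the explicit derivative $\partial_{x_{i}}q_{\bf 0}(u)=|u|^{-10}\bigl(\overline{e_{i}}\,|u|^{2}-8u_{i}\,\overline{u}\bigr)$ with $u:=w-z$, which rewrites the first sum as
$$
|u|^{-8}\sum_{i=0}^{7}\overline{e_{i}}\,(e_{i}f)-8\,|u|^{-10}\,\overline{u}\,(uf).
$$
Now the alternative property of $\mathbb{O}$ --- concretely identity \eqref{dieckmann} in the form $\overline{b}(ba)=a(\overline{b}b)$ --- gives $\overline{e_{i}}(e_{i}f)=f$ for every $i$ and $\overline{u}(uf)=|u|^{2}f$, so this expression collapses to $8|u|^{-8}f-8|u|^{-8}f=0$. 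The point is that each bracket arising here involves at most two independent octonions ($u$, or some $e_{i}$, together with $f$), so the subalgebra they generate is associative and \eqref{dieckmann} applies; this is precisely why the Cauchy kernel survives the failure of a general Borel-Pompeiu formula, and it is the step I expect to demand the most care.

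Granting $d\omega=0$, the conclusion is routine. If $z$ lies in the exterior of $G$, then $\omega$ is smooth and closed on all of $\overline{G}$, and Stokes' theorem on the compact domain $G$ with strongly Lipschitz boundary gives $\int_{\partial G}\omega=0$, which is the case $\chi(z)=0$. If $z$ lies in the interior of $G$, one applies Stokes on $G_{\varepsilon}:=G\setminus\overline{B_{8}(z,\varepsilon)}$ for small $\varepsilon>0$; since $\partial G_{\varepsilon}$ consists of $\partial G$ together with $S_{7}(z,\varepsilon)$ carrying the reversed orientation, this yields $\int_{\partial G}\omega=\int_{S_{7}(z,\varepsilon)}\omega$, where $S_{7}(z,\varepsilon)$ now carries the outward normal $n(w)=(w-z)/\varepsilon$. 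By \eqref{dieckmann},
$$
q_{\bf 0}(w-z)\,\bigl(n(w)f(w)\bigr)=\varepsilon^{-9}\,\overline{(w-z)}\,\bigl((w-z)f(w)\bigr)=\varepsilon^{-7}f(w),
$$
so letting $\varepsilon\to0$, using the continuity of $f$ at $z$ and $\mathrm{area}(S_{7})=2\pi^{4}/\Gamma(4)=\pi^{4}/3$, one obtains $\int_{\partial G}\omega=\tfrac{\pi^{4}}{3}f(z)$, which is the asserted identity after multiplication by $3/\pi^{4}$. The second formula, for right $\mathbb{O}$-regular $f$, is proved by the mirror-image computation: replace $d\sigma(w)\,f(w)$ and left multiplication by $q_{\bf 0}$ with $f(w)\,d\sigma(w)$ and right multiplication by $q_{\bf 0}$ throughout, so that the vanishing summand becomes $(f{\cal D})\,q_{\bf 0}=0$, and apply \eqref{dieckmann} on the opposite side in the remaining sum.
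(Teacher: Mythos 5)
The paper does not prove this proposition; it is recalled verbatim from the cited sources (Imaeda, Nono, Li--Peng), so there is no in-paper argument to compare against. Your proof is correct and is essentially the standard one from those references: the only genuinely octonionic point is the one you isolate, namely that $d\omega$ reduces to sums of products involving at most two independent octonions ($e_i$ or $u=w-z$ together with $f$), so that Artin's theorem / the alternative law in the form of identity \eqref{dieckmann} gives $\overline{e_i}(e_if)=f$ and $\overline{u}(uf)=|u|^2f$ and the terms cancel exactly; the residue computation on the small sphere, the constant $2\pi^4/\Gamma(4)=\pi^4/3$, and the mirror argument for right $\mathbb{O}$-regular $f$ are all in order. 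Your observation about which bracketing survives is precisely the reason the paper displays the associator-corrected formula for the opposite bracketing immediately after the proposition.
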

The way how the parenthesis are put is of crucial importance. Putting the parenthesis the other way around, leads in the left $\mathbb{O}$-regular case to a different formula of the form
$$
\frac{3}{\pi^4} \int\limits_{\partial G} \Big( q_{\bf 0}(w-z) d\sigma(w)\Big) f(w) = \chi(z) f(z) + \int\limits_G \sum\limits_{i=0}^7 \Big[q_{\bf 0}(w-z),{\cal{D}}f_i(w),e_i  \Big] dw_0 \wedge \cdots \wedge dw_7, 
$$
where $[a,b,c] := (ab)c - a(bc)$ stands for the associator of three octonionic elements. The volume integral which appears additionally always vanishes in algebras where one has the associativity, such as in Clifford algebras. We refer the interested reader to \cite{XL2002} for the details.

\begin{remark} It is important to mention that there is more than one possible function theory in the octonions. In the recent years, a lot of effort has been done to develop an octonionic function theory of slice regular octonionic functions, see for example {\rm \cite{GPzeroes,GP,JRS}}. An interesting research perspective will consist in developing analogies of the results from this paper for the octonionic slice regular function theories.  However, the treatment of these questions will be left for another follow-up paper.

\end{remark} 
% An important subcase is obtained when we take for $f$ the constant function $f(z) = 1$ for all $z \in U$ which is trivially left and right $\mathbb{O}$-regular. Then the Cauchy integral simplifies to the constant value  
% $$ \chi(z) = \frac{3}{\pi^4} \int\limits_{\partial G} q_{\bf 0}(w-z) d\sigma(w),\quad {\rm resp.}\; \chi(z)= \frac{3}{\pi^4} \int\limits_{\partial G}   d\sigma(w) q_{\bf 0}(w-z),$$ simply indicating if $z$ belongs to the interior or to the exterior component of $\partial G$.  
 
\section{Singly-periodic $\mathbb{O}$-regular tangent, cotangent, secant and cosecant}
In this paper we shortly introduce the singly-periodic octonionic moogenic generalizations of the tangent, the cotangent, the secant and the cosecant function and explain some of their basic properties. In particular, we discuss their mutual relations and present a duplication formula which provides us with a generalization of the classical cotangent double angle formula. 

Like in Clifford analysis, the product or quotient of two left $\mathbb{O}$-regular functions is not left $\mathbb{O}$-regular anymore in general. From this viewpoint it does not make sense to introduce octonionic generalizations of the above mentioned trigonometric functions by forming products or quotients composed by some kind of $\mathbb{O}$-regular generalizations of the sine or cosine function. In contrast to Clifford analysis left $\mathbb{O}$-regular functions do not even have the algebraic structure of an $\mathbb{O}$-module. Fortunately, transformations of the argument of the form $z \mapsto z+ \omega$ are allowed; they keep the $\mathbb{O}$-regularity property as a consequence of the chain rule. So, one admissible way to introduce the above mentioned functions, is the use of partial fraction Mittag-Leffler series. 

Starting from the well-known cotangent identity from complex analysis
$$
\cot(z) = \frac{1}{z} + \sum\limits_{m \in \mathbb{Z} \backslash\{0\}} \Big(\frac{1}{z+\pi m} - \frac{1}{\pi m}   \Big)
$$ 
it makes sense to introduce a singly-periodic $\mathbb{O}$-regular generalized cotangent (with period $\pi$) by 
$$
\cot(z) = \sum\limits_{m \in \mathbb{Z}} q_{\bf 0}(z+ \pi m) = \sum\limits_{m \in \mathbb{Z}} \frac{\overline{z+\pi m}}{|z+\pi m|^8}. 
$$
Since $q_{\bf 0}(z)$ has a homogeneity degree of $-7$ this series can be majorized by $\sum\limits_{m \in \mathbb{N}} \frac{C}{m^7}$ which clearly is convergent. So, in contrast to the complex case, a special re-ordering of the series is not required. Since one only applies a shift in the argument of the form $z \mapsto z + \pi m$, the octonionic regularity is preserved in each term. Consequently, the $\mathbb{O}$-regularity of the cotangent follows from the direct application of the octonionic version of Weierstra{\ss}' convergence theorem presented in \cite{XZL}~Theorem~11. 

While in classical complex analysis the cotangent satisfies $2 \cot(2z) = \cot(z)+\cot(z+\frac{\pi}{2})$, in the octonionic setting it satisfies a modified duplication formula of the form
\begin{equation}\label{doubleangle}
128 \cot(2z) = \cot(z) + \cot(z+\frac{\pi}{2}),
\end{equation} 
like in the Clifford analysis case addressing $\mathbb{R}^8$.  This is a consequence of the different homogeneity degree of $q_{\bf 0}$. 

This can be verified by a direct computation of the form
\begin{eqnarray*}
& & \cot(z)+ \cot(z+\frac{\pi}{2}) -128 \cot(2z) \\
&=& \sum\limits_{m \in \mathbb{Z}} \frac{\overline{z}+\pi m}{|z+ \pi m|^8} + \sum\limits_{m \in \mathbb{Z}} \frac{\overline{z}+\frac{\pi}{2}+\pi m}{|z+\frac{\pi}{2} + \pi m|^8} - 128 \sum\limits_{m \in \mathbb{Z}} \frac{\overline{2z}+\pi m}{|2z+ \pi m|^8}  \\
&=& \sum\limits_{m \in \mathbb{Z}} \Bigg[
\frac{2^7(2\overline{z}+2\pi m)}{|2z+ 2\pi m|^8}  + \frac{2^7(2\overline{z}+\pi +2\pi m)}{|2z+\pi+ 2\pi m|^8} 
  - \frac{2^7(2\overline{z}+ \pi m)}{|2z+ \pi m|^8} \Bigg] = 0.
\end{eqnarray*}

If one wants to introduce an octonionic regular tangent function, then it does not make sense to define it as a map of the form $z \mapsto \cot(z)^{-1}$, because the quotient forming does not preserve the $\mathbb{O}$-regularity. But, like in associative Clifford analysis, one can meaningfully introduce an $\mathbb{O}$-regular tangent function by applying an $\mathbb{O}$-regular preserving linear transformation in the argument of the cotangent, viz:
$$
\tan(z) := - \cot(z+\frac{\pi}{2}).
$$
In view of the duplication formula (\ref{doubleangle}) one also gets the relation
$$
\tan(z) = \cot(z) - 128 \cot(2z).
$$
Furthermore, one can introduce an $\mathbb{O}$-regular cosecant by
$$
\csc(z) := \sum\limits_{n \in \mathbb{Z}} (-1)^n \frac{\overline{z}+\pi n}{|z+\pi n|^8} = \frac{1}{64} \cot(\frac{z}{2}) - \cot(z).
$$
Finally, the secant can be introduced in terms of a translated cosecant viz
$$
\sec(z) := \csc(z+\frac{\pi}{2}).
$$
A combination of these identies produces the relation
$$
128 \cot(2z) = \csc(z)+\tan(z)-\frac{1}{64} \tan(\frac{z}{2}).
$$

\begin{remark} As mentioned in {\rm \cite{Nolder2018}} and {\rm \cite{Kra2019-1}} one can also introduce multi-periodic generalizations of these trigonometric series, namely by extending the summation over a $p$-dimensional lattice of dimension $p \in \{2,\ldots,7\}$. In the case $p=7$ one has to re-group the terms in the way suggested in {\rm \cite{Nolder2018,Kra2019-1}} to guarantee the convergence, analogously to the classical complex and Clifford-monogenic case, see also {\rm \cite{Kra2004}}. Also in these cases one obtains generalizations of the double angle formula and the identities described above. These are similar to those obtained for the associative Clifford algebra case addressing dimension $n=8$. In this more general context one deals with octonionic monogenic generalized Eisenstein series. 
\end{remark}

\section{Octonionic Bergman and Szeg\"o kernels}
First we summarize the known results which were discovered recently:  
\subsection{The Bergman space of the octonionic unit ball}
In \cite{WL2018} the Bergman kernel of left $\mathbb{O}$-regular functions over the octonionic unit ball $B_8(0,1) = \{z \in \mathbb{O}\mid |z| < 1\}$  has been computed. 
A special difficulty in the octonionic context that had to be solved was to find the right choice of an inner product that guarantees existence and uniqueness of the Bergman kernel in the unit ball setting. Instead of defining the inner product in the classical way, the authors proposed the following modified definition of an $L^2$-inner product. For two functions $f,g \in {\cal{B}}^2(B_8(0,1)):= L^2(B_8(0,1)) \cap {\rm Ker} \;{\cal{D}}$ which means that
$$
\int\limits_{B_8(0,1)} |f(z)|^2 dV(z) < +\infty, \quad\quad {\cal{D}}f(z)=0\;\;\forall z \in B_8(0,1)
$$
and 
$$
\int\limits_{B_8(0,1)} |g(z)|^2 dV(z) < +\infty, \quad\quad {\cal{D}}g(z)=0\;\;\forall z \in B_8(0,1),
$$
where $dV = dx_0 dx_1 \cdots dx_7$ is the scalar valued volume measure, 
the Bergman inner product is defined by
$$
(f,g)_{B_8} := \frac{3}{\pi^4} \int\limits_{B_8(0,1)} \Big(\overline{g(z)} \cdot  \frac{\overline{z}}{|z|}   \Big) \cdot \Big(\frac{z}{|z|} \cdot f(z)  \Big) \cdot dV.
$$
Notice that in the case of associativity, this inner product coincides with the usual one used in Clifford analysis, since $\frac{\overline{z}}{|z|} \cdot \frac{z}{|z|}=1$. However, in the octonionic case, the parenthesis cannot be shifted, so it makes a difference. Actually, it is this particular choice that guarantees the existence of a unique Bergman kernel. If $f=g$, then the inner product represents the usual $L_2$ norm introduced before, because $$(f,f) = \frac{3}{\pi^4} \int\limits_{B_8(0,1)} \overline{f(z)} f(z) dV = \frac{3}{\pi^4} \int\limits_{B_8(0,1)} |f(z)|^2 dV =:\|f\|^2_{B_8},$$ in view of the octonionic identity $$(\overline{f} \overline{z}) \cdot (z f) =  \overline{f} (\overline{z}z) f = \overline{f} |z|^2 f = |f|^2 |z|^2.$$

In \cite{WL2018} the authors showed by an explicit computation that the expression
$$
B_{B_8}(z,w) = \frac{[6(1-|w|^2|z|^2)+2(1-\overline{z}w)]\cdot (1-\overline{z}w)
}{|1-\overline{z}w|^{10}} 
$$
reproduces all elements of ${\cal{B}}^2(B_8(0,1))$ and hence is the $\mathbb{O}$-regular Bergman kernel of $B_8(0,1)$. One has $f(w)=(f,B(\cdot,w))_{B_8}$. In Paragraph 5 of \cite{WL2018} the authors also proved the more compact representation 
$$
\overline{B_{B_8}(z,w)} \cdot \overline{z} = \overline{{\cal{D}}_w} \Bigg( \frac{1-|w|^2|z|^2}{|1-w\overline{z}|^8}\Bigg).
$$   
Notice that in contrast to the Clifford analysis setting, the octonionic Bergman space of left $\mathbb{O}$-regular functions does not have the algebraic structure of a right $\mathbb{O}$-module, since $f \in $Ker ${\cal{D}}$ does not imply that also $f\cdot \lambda \in $ Ker ${\cal{D}}$ for a general octonionic $\lambda$. This is a further difference to Clifford analysis. The term space here does not have the meaning of a classical vector space in the context of this paper. The octonionic Bergman space is neither an octonionic vector space nor even an octonionic module. The same holds for the octonionic Hardy space, that will be introduced in the next subsection.  
\subsection{The Hardy space and the Szeg\"o kernel of the octonionic unit ball}
In \cite{WL2018} a formula for the reproducing Szeg\"o kernel of the octonionic unit ball has been presented, too. The associated Hardy space $H^2(S_7)$ of left $\mathbb{O}$-regular functions is said to be the closure of the set of all functions that are square-integrable over the unit sphere and left $\mathbb{O}$- regular inside the unit ball and that have additionally a continuous extension to the boundary. Here, square-integrable means that 
$$
\int\limits_{S_7(0,1)} |f(z)|^2 dS,
$$ 
where $dS$ is the scalar surface measure of the $7$-dimensional unit sphere in $\mathbb{R}^8$. 

Similarly to the Bergman case it makes sense to introduce the inner product of two functions $f,g \in H^2(S_7)$ by 
$$
(f,g)_{S_7} := \frac{3}{\pi^4} \int\limits_{S_7(0,1)} \overline{(z \cdot g(z))} \cdot (z \cdot f(z)) dS = \frac{3}{\pi^4} \int\limits_{S_7(0,1)} (\overline{g(z)} \cdot \overline{z}) \cdot (z \cdot f(z)) dS,
$$
cf. \cite{WL2018}. Defining the inner product in this way, one can directly deduce the associated Szeg\"o kernel from Cauchy's integral formula extending the classical argumentation from the Clifford analysis setting, cf. for instance \cite{Co}. Note that on the unit sphere the oriented octonionic surface element $d\sigma(z)$ can be written explicitly as $d\sigma(z) = {\bf n}(z) \cdot dS = z \cdot dS$ where the exterior unit normal field ${\bf n}(z)=z$ in the octonionic language. 

For any $f \in H^2(S_7)$ Proposition~\ref{cauchy} therefore implies that 
\begin{eqnarray*}
f(w) &=& \frac{3}{\pi^4} \int\limits_{S_7} \frac{\overline{z-w}}{|z-w|^8} \cdot \Bigg( d\sigma(z) \cdot f(z) \Bigg)\\
&=& \frac{3}{\pi^4} \int\limits_{S_7} \frac{\overline{z-w}}{|z-w|^8} \cdot \Bigg( z \cdot dS \cdot f(z) \Bigg)
\end{eqnarray*}
Since $dS$ is scalar valued, it commutes with the octonionic expressions, and the latter equation reads
\begin{eqnarray*}
f(w) &=& \frac{3}{\pi^4} \int\limits_{S_7} \frac{\overline{z-w}}{|z-w|^8} \cdot (z \cdot f(z)) dS\\
&=& \frac{3}{\pi^4} \int\limits_{S_7} \frac{\overline{z}-\overline{w}(z \overline{z})}{\underbrace{|z|}_{=1}|1-\overline{z}w|^8} \cdot (z \cdot f(z)) dS.
\end{eqnarray*}
By applying the rule (\ref{dieckmann}) in the numerator and relying on $|z|=1$ in the denominator the latter equation becomes 
\begin{eqnarray*}
f(w) &=& \frac{3}{\pi^4} \int\limits_{S_7} \frac{\overline{z}-(\overline{w}\cdot z)\cdot \overline{z}}{|1-\overline{z}w|^8} \cdot (z \cdot f(z)) dS\\
&=& \frac{3}{\pi^4} \int\limits_{S_7} \frac{(1-\overline{w}z)\cdot \overline{z}}{|1-\overline{z}w|^8} \cdot (z \cdot f(z)) dS\\
&=& \frac{3}{\pi^4} \int\limits_{S_7} \Bigg(\overline{\frac{1-\overline{z}w}{|1-\overline{z}w|^8}} \cdot \overline{z}       \Bigg) \cdot \Bigg(z \cdot f(z)     \Bigg) dS
\end{eqnarray*}
Taking into account the definition of the inner product allows us to read off the reproducing Szeg\"o kernel as
$$
S_{S_7}(z,w) = \frac{1-\overline{z}w}{|1-\overline{w}|^8},
$$ 
satisfying the reproduction property $f(w)=(f,S(\cdot,w))_{S_7}$ in the sense of this inner product.  

\subsection{The Bergman kernel of the octonionic right half-space}
In their very recent paper \cite{WL2020} Jinxun Wang and Xingmin Li also established a formula for the Bergman kernel of the octonionic right half-space 
$$
\mathbb{R}^8_+ = H^{+}(\mathbb{O}) := \{z \in \mathbb{O} \mid \Re(z) > 0\}
$$
In this setting one considers functions $f,g \in L^2(H^{+}(\mathbb{O})) \cap {\rm Ker}\;{\cal{D}} =:B^2(H^{+}(\mathbb{O}))$, i.e. one requires these functions to satisfy
$$
\int\limits_{H^{+}(\mathbb{O})} |f(z)|^2 dV < +\infty,\quad \int\limits_{H^{+}(\mathbb{O})} |g(z)|^2 dV < +\infty
$$
and ${\cal{D}}f(z) = 0$ and ${\cal{D}} g(z) = 0$ for all $z \in H^{+}(\mathbb{O})$. In contrast to the unit ball setting there is no obstacle to define the inner product on $B^2(H^{+}(\mathbb{O}))$ in the usual way by 
$$
(f,g)_{H^+} := \frac{3}{\pi^4} \int\limits_{H^{+}(\mathbb{O})} \overline{g(z)} \cdot f(z) dV(z),
$$ 
cf. \cite{WL2020}. Following \cite{WL2020} Theorem~1, in the sense of this inner product the Bergman kernel of $B^2(H^{+}(\mathbb{O}))$ exists and is uniquely defined. It equals 
$$
B_{H^+}(z,w) = - 2 \frac{\partial }{\partial x_0} q_{\bf 0}(z + \overline{w}) = - 2 \frac{\partial }{\partial x_0} \Bigg\{\frac{\overline{z}+w}{|\overline{z}+w|^8}   \Bigg\} 
$$
and actually satisfies $f(w) =(f,B_{H^+}(\cdot,w))_{H^+}$ in the sense of this inner product. 
\begin{remark} The authors suggest two different ways of proving this formula in {\rm \cite{WL2020}}. 
\par\medskip\par
1. Method: One starts from the formula for the Bergman kernel of the unit ball. First one shows applying the known formula (relying on the particular choice of the inner product in the unit ball setting as described above) that the $\mathbb{O}$-regular Bergman kernel of an arbitrary ball $B(p,r)$ centered around a fixed $p \in \mathbb{O}$ with arbitrary radius $r > 0$ has the form
$$
B_{B_8(p,r)}(z,w) = \frac{1}{r^8} B_{B_8(0,1)}\Big(\frac{z-p}{r},\frac{w-p}{r}\Big).
$$ 
In this context the proper choice of the inner product for $B^2(B_{p,r})$ is
$$
(f,g)_{B_8(p,r)} := \frac{3}{\pi^4} \int\limits_{B_8(p,r)} \Bigg( \overline{g(z)} \cdot \frac{\overline{z-p}}{|z-p|}   \Bigg) \cdot \Bigg(\frac{z-p}{|z-p|} \cdot f(z)   \Bigg) dV(z).
$$
Next the authors apply a limit argument ({\rm Lemma 3.3  \cite{WL2020}}) establishing that 
$$
B_{H^+}(z,w) = \lim\limits_{r \to \infty} B_{B_8(r,r)}(z,w) \quad \forall z,w \in H^+(\mathbb{O})
$$
from which the formula follows.
\par\medskip\par
2. Method: One can also use a classical density argument. 

For $\delta > 0$ one defines $H^{\delta} := \{z \in \mathbb{O} \mid \Re(\delta) > 0\}$ and considers $B^2(H^{\delta}) := L^2(H^{\delta}) \cap {\rm Ker}\; {\cal{D}}$. With the same standard arguments that one uses in classical harmonic analysis the authors managed ({\rm Lemma~1, \cite{WL2020}}) to prove that $\bigcup_{0 < \delta < 1} B^2(H^{\delta})$ is dense in $B^2(H^+)$, see {\rm \cite{Axler}}. This tool in hand allows one to prove the reproduction of the kernel expression directly by performing partial integration with respect to the $x_0$-direction, in the same way as presented by J. Cnops in {\rm \cite{Cn}} for the Clifford analysis setting.
\end{remark} 
\subsection{The Szeg\"o kernel of an octonionic strip domain and of the right half-space}
In this section we set up an explicit formula for the $\mathbb{O}$-regular Szeg\"o kernel of an octonionic strip domain of the form 
$$
{\cal{S}} := \{ z \in \mathbb{O} \mid 0 < \Re(z) < d\} 
$$ 
where $d > 0$ is an arbitrary real parameter. 

The inner product in the context of the octonionic Hardy space over ${\cal{S}}$ has the form
$$
(f,g)_{\partial {\cal{S}}} := \frac{3}{\pi^4} \int\limits_{\partial {\cal{S}}} \overline{g(z)} f(z) dS(z).
$$

This consideration then also allows us to set up a formula for the Szeg\"o kernel of the right half-space $H^{+}(\mathbb{O})$ by considering the limit $d \to +\infty$ which fills in a gap that was left open in the preceeding papers \cite{WL2018,WL2020}. In addition to that the results presented in this section and the following one provide us with an octonionic generalization of the results that we obtained in the associative Clifford analysis setting in \cite{ConKraCV}. 

First we prove that 
\begin{theorem}
The $\mathbb{O}$-regular Szeg\"o kernel of the strip domain ${\cal{S}} := \{ z \in \mathbb{O} \mid 0 < \Re(z) < d\}$ has the form
$$
S_{\cal{S}}(z,w) =  \sum\limits_{n=-\infty}^{+\infty} (-1)^n \frac{\overline{z}+w+2dn}{|\overline{z}+w+2dn|^8} = \Big(\frac{\pi}{2d}\Big)^7 \csc(\frac{\pi}{2d}(z+\overline{w})).
$$
\end{theorem}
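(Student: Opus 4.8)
The plan is to run the method-of-images argument familiar from the Clifford-analytic setting (compare \cite{Co,ConKraCV}), feeding the octonionic Cauchy integral formula (Proposition~\ref{cauchy1}) on the two faces of the strip, while keeping the bracketing under strict control since $\mathbb{O}$ is non-associative. Write $\partial{\cal S} = \Gamma_0 \cup \Gamma_d$ with $\Gamma_0 := \{z\in\mathbb{O}\mid\Re(z)=0\}$ and $\Gamma_d := \{z\in\mathbb{O}\mid\Re(z)=d\}$. On $\Gamma_0$ the outward unit normal is the real scalar $-1$ and on $\Gamma_d$ it is $+1$, so on each face $d\sigma = n\,dS$ is a real-scalar-valued measure and therefore commutes with all octonionic products. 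The elementary identity driving everything is: for $\zeta\in\mathbb{O}$ with $\Re(\zeta)=0$ and arbitrary $v\in\mathbb{O}$ one has $\langle\zeta,v\rangle = -\langle\zeta,\overline v\rangle$ (immediate from $\langle a,b\rangle = \Re(a\overline b)$ and $\Re(\zeta(v+\overline v)) = 2\Re(v)\Re(\zeta)=0$), hence $|\zeta - v| = |\zeta + \overline v|$ and $q_{\bf 0}(\zeta - v) = \overline{\zeta - v}/|\zeta - v|^8 = -(\zeta+\overline v)/|\zeta+\overline v|^8$; on $\Gamma_d$ the same identity is used after the shift $\zeta\mapsto\zeta - d$.

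Starting from $\overline{S_{\cal S}(\zeta,w)} = \sum_n(-1)^n(\zeta+\overline w+2dn)/|\zeta+\overline w+2dn|^8$ and applying this identity once on $\Gamma_0$ and once, shifted, on $\Gamma_d$, together with a single reindexing of the summation, I would prove that for
$$
G_w(\zeta) := \sum_{m\in\mathbb{Z}}(-1)^m\,q_{\bf 0}(\zeta - w + 2dm)
$$
one has $\overline{S_{\cal S}(\zeta,w)} = -G_w(\zeta)$ on $\Gamma_0$ and $\overline{S_{\cal S}(\zeta,w)} = +G_w(\zeta)$ on $\Gamma_d$. This $G_w$ is $\mathbb{O}$-regular on $\mathbb{O}\setminus\{w - 2dm\mid m\in\mathbb{Z}\}$, and among these singularities only $w$ (from $m=0$) lies in $\overline{\cal S}$; all others sit at distance at least $\min(\Re w, d-\Re w)>0$ from $\overline{\cal S}$. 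Beforehand I would verify that $S_{\cal S}(\cdot,w)$ really lies in the Hardy space $H^2(\partial{\cal S})$: the defining series converges absolutely and locally uniformly because $q_{\bf 0}$ is homogeneous of degree $-7$ (majorise the tail by $\sum|n|^{-7}$, as for $\csc$ in Section~3), so $S_{\cal S}(\cdot,w)$ is $\mathbb{O}$-regular on ${\cal S}$ by the octonionic Weierstra\ss{} theorem \cite{XZL} and continuous up to $\partial{\cal S}$, and $|S_{\cal S}(\zeta,w)| = O(|\zeta|^{-6})$ along each hyperplane gives square-integrability over $\partial{\cal S}$.

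With the two boundary identities and the real-scalar character of $d\sigma$ on each face, the inner product collapses to a single Cauchy-type integral,
$$
(f, S_{\cal S}(\cdot,w))_{\partial{\cal S}} = \frac{3}{\pi^4}\int_{\partial{\cal S}} G_w(\zeta)\,(d\sigma(\zeta)\,f(\zeta)).
$$
Splitting $G_w = q_{\bf 0}(\,\cdot\, - w) + \widetilde G_w$ with $\widetilde G_w := \sum_{m\neq 0}(-1)^m q_{\bf 0}(\,\cdot\, - w + 2dm)$ (which is $\mathbb{O}$-regular on a neighbourhood of $\overline{\cal S}$), Proposition~\ref{cauchy1} applied to $q_{\bf 0}(\,\cdot\,-w)$ contributes $f(w)$ (the case $\chi=1$) and applied term by term to $\widetilde G_w$ contributes $0$ (the exterior case $\chi=0$, as each $w - 2dm$, $m\neq 0$, lies outside $\overline{\cal S}$); interchanging the sum with the integral is legitimate because the series converges uniformly on $\partial{\cal S}$ and is dominated there by a function in $L^2(\partial{\cal S})$. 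Hence $f(w) = (f, S_{\cal S}(\cdot,w))_{\partial{\cal S}}$ for all $f\in H^2(\partial{\cal S})$, i.e.\ $S_{\cal S}$ is the $\mathbb{O}$-regular Szeg\"o kernel of ${\cal S}$ by uniqueness of the reproducing kernel.

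The main obstacle is that ${\cal S}$ is unbounded, so Proposition~\ref{cauchy1} is not directly available: one has to argue on truncated domains ${\cal S}\cap B_8(0,R)$ --- and on slightly narrowed strips, with a density argument in the spirit of \cite{Axler,ConKraCV}, so that $f$ is genuinely $\mathbb{O}$-regular up to the boundary --- and then let $R\to\infty$. On the spherical part of $\partial({\cal S}\cap B_8(0,R))$ one has $|q_{\bf 0}| = O(R^{-7})$ against a surface measure of size $O(R^{6})$, so this piece contributes $O(R^{-1})\to 0$ once $f$ is controlled pointwise deep inside the strip, which follows from the Cauchy representation of $f$ in the standard Hardy-space manner. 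This estimate and the accompanying bookkeeping, closely following \cite{ConKraCV}, is where the real work lies. Finally, the second equality in the statement is immediate: writing $z+\overline w+2dn = \frac{2d}{\pi}\bigl(\frac{\pi}{2d}(z+\overline w) + \pi n\bigr)$ and using $q_{\bf 0}(\lambda\eta) = \lambda^{-7}q_{\bf 0}(\eta)$ for real $\lambda>0$ turns the series into $(\pi/2d)^7$ times the defining Eisenstein series for $\csc$ of Section~3 evaluated at $\frac{\pi}{2d}(z+\overline w)$.
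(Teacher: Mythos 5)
Your proposal is correct and follows essentially the same route as the paper: the method-of-images identity converting $\overline{S_{\cal S}(\cdot,w)}$ on each face of $\partial{\cal S}$ into the alternating sum of translated Cauchy kernels $\sum_m(-1)^m q_{\bf 0}(\cdot-w+2dm)$, the recombination into a single Cauchy-type boundary integral using the real-scalar normals $\mp e_0$, and the application of Proposition~\ref{cauchy1} with only the $m=0$ singularity inside $\overline{\cal S}$. The only difference is that you spell out the truncation to ${\cal S}\cap B_8(0,R)$, the Hardy-space membership of the kernel, and the density/narrowing step, which the paper compresses into the citation of the ``standard stretching argument'' from \cite{ConKraCV}.
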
 
\begin{proof}
Consider first a function $f$ is that is left $\mathbb{O}$-regular in a neighborhood of the closure of ${\cal{S}}$. The boundary $\partial {\cal{S}}$ splits into two components: 
$$
\partial {\cal{S}} = \{ z \in \mathbb{O} \mid x_0=0\} \cup \{z \in \mathbb{O} \mid x_0 = d\} =: \partial {\cal{S}}_1 \cup \partial {\cal{S}}_2.
$$
On the component $\partial {\cal{S}}_1$ we have $\Re(w) = w_0 = 0$ which means $\overline{w}=-w$. On the other component $\partial {\cal{S}}_2$ we have $\Re(w)=d$ which means that $\overline{w}=2d-w$. Next one computes
\begin{eqnarray*}
 & & \int\limits_{w \in \partial {\cal{S}}} \frac{3}{\pi^4} \Bigg(\sum\limits_{n=-\infty}^{+\infty} (-1)^n \frac{\overline{z}+w+2dn}{|\overline{z}+w+2dn|^8} \Bigg) \cdot f(w) \cdot dS(w)\\
&=& \int\limits_{w \in \partial {\cal{S}}_1} \frac{3}{\pi^4} \Bigg(\sum\limits_{n=-\infty}^{+\infty} (-1)^n \frac{\overline{z}+w+2dn}{|\overline{z}+w+2dn|^8} \Bigg) \cdot f(w) \cdot dS(w)\\
&+& \int\limits_{w \in \partial {\cal{S}}_2} \frac{3}{\pi^4} \Bigg(\sum\limits_{n=-\infty}^{+\infty} (-1)^n \frac{\overline{z}+w+2dn}{|\overline{z}+w+2dn|^8} \Bigg) \cdot f(w) \cdot dS(w) \\
&=& \int\limits_{w \in \partial {\cal{S}}_1} \frac{3}{\pi^4} \Bigg(\sum\limits_{n=-\infty}^{+\infty} (-1)^n \frac{\overline{z+2dn-w}}{|z+2dn-w|^8} \Bigg) \cdot f(w) \cdot dS(w)\\
&+& \int\limits_{w \in \partial {\cal{S}}_2} \frac{3}{\pi^4} \Bigg(\sum\limits_{n=-\infty}^{+\infty} (-1)^n \frac{\overline{z+2dn+2d-w}}{|z+2dn-2d-w|^8} \Bigg) \cdot f(w) \cdot dS(w)\\
&=& \int\limits_{w \in \partial {\cal{S}}_1} \frac{3}{\pi^4} \Bigg(\sum\limits_{n=-\infty}^{+\infty} (-1)^n \frac{\overline{z+2dn-w}}{|z+2dn-w|^8} \Bigg) \cdot f(w) \cdot dS(w)\\
&+& \int\limits_{w \in \partial {\cal{S}}_2} \frac{3}{\pi^4} \Bigg(\sum\limits_{n=-\infty}^{+\infty} (-1)^n \frac{\overline{z+2d(n+1)-w}}{|z+2d(n+1)-w|^8} \Bigg) \cdot f(w) \cdot dS(w)\\
&=& \int\limits_{w \in \partial {\cal{S}}_1} \frac{3}{\pi^4} \Bigg(\sum\limits_{n=-\infty}^{+\infty} (-1)^n \frac{\overline{z+2dn-w}}{|z+2dn-w|^8} \Bigg) \cdot f(w) \cdot dS(w)\\
&+& \int\limits_{w \in \partial {\cal{S}}_2} \frac{3}{\pi^4} \Bigg(\sum\limits_{N=-\infty}^{+\infty} (-1)^{N+1} \frac{\overline{z+2dN-w}}{|z+2dN-w|^8} \Bigg) \cdot f(w) \cdot dS(w)\\
&=& \int\limits_{w \in \partial {\cal{S}}_1} \frac{3}{\pi^4} \Bigg(\sum\limits_{n=-\infty}^{+\infty} (-1)^n \frac{\overline{z+2dn-w}}{|z+2dn-w|^8} \Bigg) \cdot f(w) \cdot dS(w)\\
&-& \int\limits_{w \in \partial {\cal{S}}_2} \frac{3}{\pi^4} \Bigg(\sum\limits_{n=-\infty}^{+\infty} (-1)^n \frac{\overline{z+2d-w}}{|z+2dn-w|^8} \Bigg) \cdot f(w) \cdot dS(w).
\end{eqnarray*} 
Note that the exterior unit normal at $\partial {\cal{S}}_1$ equals ${\bf n} =-e_0$ but at $\partial {\cal{S}}_2$ it equals ${\bf n}=+e_0$. The previous sum of the two integrals can thus be written as one single integral in the form 
\begin{eqnarray*}
 & & \int\limits_{w \in \partial {\cal{S}}_1} \frac{3}{\pi^4} \Bigg(\sum\limits_{n=-\infty}^{+\infty} (-1)^n 
\frac{\overline{z}-\overline{w}+2dn}{|z-w+2dn|^8}  \Bigg) f(w) dS(w)\\
&-&\int\limits_{w \in \partial {\cal{S}}_2} \frac{3}{\pi^4} \Bigg(\sum\limits_{n=-\infty}^{+\infty} (-1)^n 
\frac{\overline{z}-\overline{w}+2dn}{|z-w+2dn|^8}  \Bigg) f(w) dS(w)\\
&=& - \int\limits_{w \in \partial {\cal{S}}} \frac{3}{\pi^4} \Bigg(\sum\limits_{n=-\infty}^{+\infty} (-1)^n 
\frac{\overline{z}-\overline{w}+2dn}{|z-w+2dn|^8}  \Bigg)\cdot (d\sigma(w) \cdot f(w))\\
&=&  \int\limits_{w \in \partial {\cal{S}}} \frac{3}{\pi^4} \Bigg(\sum\limits_{m=-\infty}^{+\infty} (-1)^m  
\frac{\overline{w}-\overline{z}+2dm}{|w-z+2dm|^8}  \Bigg)\cdot (d\sigma(w) \cdot f(w)).
\end{eqnarray*}
because $d\sigma(w) = {\bf n} dS = \left\{ \begin{array}{rl} -e_0 dS, & {\rm for}\;w_0 = 0\\ e_0 dS, & {\rm for}\;w_0 = d    \end{array} \right.$

Note that $\overline{z}-\overline{w}+2dn = 0$ if and only if $\overline{w}-\overline{z}=2dn$ which only vanishes if $n=0$. The singularity only may occure in the term associated with $n=0$. Thus, the series over the integrals reduces to
\begin{eqnarray*}
& & \frac{3}{\pi^4} \int\limits_{\partial {\cal{S}}}\frac{\overline{w-z}}{|w-z|^8} \cdot \Bigg( d\sigma(w) \cdot f(w) \Bigg)\\
&+ & \frac{3}{\pi^4} \int\limits_{\partial {\cal{S}}} \Bigg(\sum\limits_{n=-\infty, n \neq 0}^{+\infty}(-1)^n
\frac{\overline{w}-\overline{z}+2dn}{|w-z+2dn|^8}    \Bigg)     \cdot \Bigg( d\sigma(w) \cdot f(w) \Bigg)\\
&=& f(z)+0 = f(z),
\end{eqnarray*}
where we applied in the first time the Cauchy integral formula. The Cauchy integral formula is also applied in the second part. Putting $\zeta := z + 2dn$, each term of the second part has the form
$$
\frac{3}{\pi^4} \int\limits_{\partial{\cal{S}}}\frac{\overline{w-\zeta}}{|w-\zeta|^8} \cdot (d\sigma(w)\cdot f(w))
$$
which vanishes completely according to Proposition~\ref{cauchy1}, because $\zeta$ lies outside of ${\cal{S}}$. $\zeta = w$ can never occure whenever $n \neq 0$ and whenever $w \in int({\cal{S}})$. It is important to mention at this point that our argumentation really requires the explicit structure of the Cauchy integral formula, because in the octonionic setting we cannot rely on a general Borel-Pomepeiu formula. Only if $g = q_{\bf 0}$, then this argumentation is true in this generality (putting the parenthesis in exactly this way). 
The rest of the statement follows by a standard stretching argument, applied in the same way as performed in \cite{ConKraCV} for the associative Clifford algebra setting. 
  
\end{proof}
From this formula it is now very easy to deduce 
\begin{corollary}
The $\mathbb{O}$-regular Szeg\"o kernel of the right octonionic half-space $H^{+}(\mathbb{O})$ has the form
$$
S_{H^{+}}(z,w) =  \frac{\overline{z}+w}{|\overline{z}+w|^8}.
$$
\end{corollary}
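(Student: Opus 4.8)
The plan is to obtain the half-space kernel as the limit $d\to+\infty$ of the strip kernel $S_{\cal S}(z,w)$ from the preceding theorem. Both representations of $S_{\cal S}$ are useful here: the Eisenstein-type series $\sum_{n}(-1)^n\frac{\overline z+w+2dn}{|\overline z+w+2dn|^8}$ makes the termwise behaviour transparent, while the closed form $\big(\frac{\pi}{2d}\big)^7\csc\big(\frac{\pi}{2d}(z+\overline w)\big)$ can be cross-checked against the Laurent expansion of the complex cosecant near the origin.

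First I would fix $z,w$ with $\Re(z),\Re(w)$ in a bounded sub-range and isolate the $n=0$ summand of $S_{\cal S}(z,w)$, which is exactly $\frac{\overline z+w}{|\overline z+w|^8}$ and is independent of $d$. For $n\neq 0$ one has $\big|\frac{\overline z+w+2dn}{|\overline z+w+2dn|^8}\big| = |\overline z+w+2dn|^{-7}$, and for $d$ large enough (say $2d|n|>2|\overline z+w|$) this is bounded by $C\,(d|n|)^{-7}$ with $C$ uniform on the chosen compact set. Summing over $n\neq 0$ gives a bound of order $d^{-7}\zeta(7)\to 0$. Hence the tail $\sum_{n\neq 0}$ tends to $0$ locally uniformly, and $\lim_{d\to\infty}S_{\cal S}(z,w)=\frac{\overline z+w}{|\overline z+w|^8}$, which is the claimed expression and is left $\mathbb{O}$-regular in $z$, being a shifted copy of $q_{\bf 0}$.

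It then remains to see that this limit function is the reproducing Szeg\"o kernel of $H^{+}(\mathbb{O})$ for the inner product $(f,g)_{\partial H^+} := \frac{3}{\pi^4}\int_{\{x_0=0\}}\overline{g(z)}f(z)\,dS(z)$. I would establish this by transferring the reproducing identity from the strips to the half-space, along the lines of the limit argument used for the octonionic Bergman kernel in \cite{WL2020} and for the Clifford-analytic strip/half-space kernels in \cite{ConKraCV}: for $f$ left $\mathbb{O}$-regular in a neighbourhood of $\overline{H^{+}(\mathbb{O})}$ and in the Hardy class, the theorem gives $f(w)=(f,S_{\cal S}(\cdot,w))_{\partial {\cal S}}$; the boundary $\partial{\cal S}$ splits into $\{x_0=0\}$ and $\{x_0=d\}$, and one shows that the contribution of the receding face $\{x_0=d\}$ tends to $0$ as $d\to\infty$ (using the decay of Hardy-space functions toward the interior of the half-space together with the uniform $O(d^{-7})$ bound on the kernel there), while on $\{x_0=0\}$ the integrand converges to $\overline{S_{H^{+}}(z,w)}f(z)$; a dominated-convergence argument then yields $f(w)=(f,S_{H^{+}}(\cdot,w))_{\partial H^+}$, and density of such $f$ in the Hardy space finishes the proof. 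Alternatively, one may verify the reproduction directly from Proposition~\ref{cauchy1} applied to a large bounded region exhausting $H^{+}(\mathbb{O})$, in which case only the single term survives --- the parenthesisation being dictated, exactly as in the proof of the theorem, by the absence of a general Borel--Pompeiu formula and the fact that $g=q_{\bf 0}$.

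The main obstacle I anticipate is this last step: justifying the passage to the limit inside the reproducing identity, i.e. controlling the flux of a Hardy-space function through the hyperplane $\{x_0=d\}$ and confirming that the $n\neq 0$ terms contribute nothing in the limit. The pointwise convergence $S_{\cal S}\to S_{H^{+}}$ is elementary; what needs care is the uniformity required to interchange the limit with the boundary integral, together with the check that the resulting kernel indeed belongs to the half-space Hardy space in its first argument, so that the reproduction statement is well posed.
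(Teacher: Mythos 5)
Your proposal follows essentially the same route as the paper: isolate the $n=0$ term of the strip kernel, which is exactly $\frac{\overline{z}+w}{|\overline{z}+w|^8}$, and observe that each $n\neq 0$ term is of order $d^{-7}$ so that the tail vanishes as $d\to+\infty$. The paper's own proof stops at this pointwise limit ("from which the statement follows"), so your additional sketch of why the limit function still reproduces --- controlling the flux of a Hardy-class function through the receding face $\{x_0=d\}$, or applying Cauchy's integral formula on an exhausting region with the parenthesisation forced by $g=q_{\bf 0}$ --- goes beyond what the paper records and fills in a step it leaves implicit.
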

\begin{proof}
In the previous theorem we proved that the Szeg\"o kernel of a strip domain bounded by the surfaces $x_0=0$ and $x_0=d$ has the representation:
\begin{eqnarray*}
S_{\cal{S}}(z,w) &=&  \sum\limits_{n=-\infty}^{+\infty} (-1)^n \frac{\overline{z}+w+2dn}{|\overline{z}+w+2dn|^8}\\
&=&  \Bigg[ \frac{\overline{z}+w}{|\overline{z}+w|^8} + \sum\limits_{n \neq 0} (-1)^n \frac{\overline{z}+w+2dn}{|\overline{z}+w+2dn|^8}\Bigg].
\end{eqnarray*}
In the limit case $d \to +\infty$, which then represents the right half-space case, we have 
$$
\frac{\overline{z}+w+2dn}{|\overline{z}+w+2dn|^8} \sim  \frac{C}{d^7} \to 0.
$$
from which the statement follows.
\end{proof}.
\begin{remark} By a direct comparison we observe that in analogy to the complex and Clifford analysis case there is a direct relation between the Szeg\"o kernel and the Bergman kernel of the right half-space of the explicit form
$$
B_{H^+}(z,w) = - 2  \frac{\partial }{\partial x_0} S_{H^+}(z,w).
$$  
\end{remark}

\subsection{The Bergman kernel of octonionic strip domains and its relation to the Szeg\"o kernel}
Finally we round off this paper by also proving a formula for the $\mathbb{O}$-regular Bergman kernel of a  strip domain of the form ${\cal{S}} = \{z \in \mathbb{O} \mid 0 < \Re(z) < d\}$ of general width $d > 0$. For clarity, the Bergman inner product in this context is defined by
$$
(f,g)_{{\cal{S}}} := \frac{3}{\pi^4} \int\limits_{{\cal{S}}} \overline{g(z)} f(z) dV(z).
$$
We prove
\begin{theorem} The reproducing $\mathbb{O}$-regular Bergman kernel of the strip domain ${\cal{S}}$ with respect to the inner product presented above has the form
$$
B_{{\cal{S}}}(z,w) = - 2 \frac{\partial }{\partial x_0} \Big\{ \Big(\frac{\pi}{2d}\Big)^7 \cot(\frac{\pi}{d}(z+\overline{w}))  \Big\} = (- 2) \sum\limits_{n=-\infty}^{+\infty} \frac{\partial }{\partial x_0} \Big\{    
\frac{\overline{z}+w+2dn}{|\overline{z}+w+2dn|^8}.
\Big\}
$$
\end{theorem}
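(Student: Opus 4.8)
The plan is to establish the two defining properties of a reproducing Bergman kernel and then to invoke uniqueness. First I would check that, for each fixed $w\in{\cal S}$, the function $z\mapsto B_{{\cal S}}(z,w)$ lies in $B^{2}({\cal S})=L^{2}({\cal S})\cap{\rm Ker}\,{\cal D}$; then I would verify the reproduction identity $f(w)=(f,B_{{\cal S}}(\cdot,w))_{{\cal S}}$ for every $f\in B^{2}({\cal S})$. Since, just as in the unit--ball and half--space situations recalled above, the chosen inner product satisfies $(f,f)_{{\cal S}}=\|f\|_{L^{2}({\cal S})}^{2}$, a reproducing kernel is unique once it exists, so these two facts pin down $B_{{\cal S}}$. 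That the two displayed expressions for $B_{{\cal S}}(z,w)$ agree is purely formal: pulling the scalar factor out of the argument of the singly--periodic cotangent of Section~3 and using that $\partial_{x_{0}}q_{\bf 0}$ is homogeneous of degree $-8$ transforms the closed form into the periodised series of $x_{0}$--differentiated, shifted Cauchy kernels, and back.

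For membership in $B^{2}({\cal S})$ I would argue as follows. Since $q_{\bf 0}$ is positively homogeneous of degree $-7$, the term $\partial_{x_{0}}q_{\bf 0}(z+\overline{w}+2dn)$ is homogeneous of degree $-8$ in its argument, and $|z+\overline{w}+2dn|\ge 2d|n|-C$ for $z$ in any fixed compact subset of ${\cal S}$ and $w$ fixed; hence $\sum_{n}\partial_{x_{0}}q_{\bf 0}(z+\overline{w}+2dn)$ is dominated by $\sum_{n}C'/(1+|n|)^{8}$ and converges normally on compacta. Each summand is left $\mathbb{O}$--regular in $z$ --- it is the Cauchy kernel composed with the translation $z\mapsto z+\overline{w}+2dn$ followed by the monogenicity--preserving operator $\partial_{x_{0}}$ --- so by the octonionic Weierstra{\ss} convergence theorem \cite{XZL} the sum is left $\mathbb{O}$--regular on ${\cal S}$, and since $z+\overline{w}+2dn\neq 0$ for every $n$ whenever $z\in\overline{\cal S}$, $w\in{\cal S}$, it extends continuously to $\overline{\cal S}$. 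For square--integrability I would use that on ${\cal S}$ the coordinate $x_{0}$ stays bounded while the other seven coordinates run over $\R^{7}$: separating the finitely many ``resonant'' indices from the tail gives a bound $|B_{{\cal S}}(z,w)|\le C_{w}(1+|\underline{z}|)^{-7}$ on $\overline{\cal S}$, and $\int_{\R^{7}}(1+|\underline{z}|)^{-14}\,d\underline{z}<\infty$ because $14>7$.

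For the reproduction identity I would first reduce, by a standard stretching argument (translate ${\cal S}$ so it is centred at the origin, dilate by a factor $t\downarrow 1$, translate back), to the case where $f$ is left $\mathbb{O}$--regular on a neighbourhood of $\overline{\cal S}$; continuity of point evaluation and of $g\mapsto(g,B_{{\cal S}}(\cdot,w))_{{\cal S}}$ on $B^{2}({\cal S})$ then gives the general case. For such $f$ I would compute $(f,B_{{\cal S}}(\cdot,w))_{{\cal S}}=\tfrac{3}{\pi^{4}}\int_{{\cal S}}\overline{B_{{\cal S}}(z,w)}\,f(z)\,dV$ term by term, exhausting ${\cal S}$ by the bounded pieces $\{0<x_{0}<d,\ |\underline{z}|<R\}$ and letting $R\to\infty$ (the lateral part $|\underline{z}|=R$ vanishing by the decay estimates just established and of $f$). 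An integration by parts in $x_{0}$ converts the $n$--th term into boundary integrals over the two flat faces $\partial{\cal S}_{1}=\{x_{0}=0\}$ and $\partial{\cal S}_{2}=\{x_{0}=d\}$ plus a residual volume integral, in which I would use ${\cal D}f=0$ to trade $\partial_{x_{0}}f$ for the tangential derivatives; after regrouping the summation, that residual term should be a Cauchy--theorem integral of the left $\mathbb{O}$--regular $q_{\bf 0}$ against $d\sigma\cdot f$, hence zero by Proposition~\ref{cauchy}. On $\partial{\cal S}_{2}$ one has $\overline{w}=2d-w$, which shifts the index $n\mapsto n+1$ exactly as in the proof of the strip Szeg\"o kernel, so that after summation the face contributions telescope into the boundary expression which, by the preceding Theorem, reproduces $f(w)$.

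The hard part will be this last step. Because octonionic analysis has no Borel--Pompeiu formula and left $\mathbb{O}$--regular functions form no module, the integration--by--parts bookkeeping must be organised so that $q_{\bf 0}$ always occupies the one privileged slot in which the Cauchy integral formula (Proposition~\ref{cauchy1}) and Cauchy's integral theorem (Proposition~\ref{cauchy}) are valid, and so that the constant units $e_{i}$ are never commuted past the remaining factors in a way that creates associator terms --- precisely the care already exercised in the proof of the Szeg\"o kernel of ${\cal S}$ and, in the associative setting, by Cnops \cite{Cn} and by Wang and Li \cite{WL2020}. Once that is arranged, the remaining computation runs parallel to the Clifford--analysis treatment of strip domains in \cite{ConKraCV}.
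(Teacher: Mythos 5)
Your proposal follows essentially the same route as the paper: integrate by parts in $x_0$ to convert the volume pairing into boundary integrals over the two faces $x_0=0$ and $x_0=d$, rewrite these (using $\overline{w}=-w$ resp.\ $\overline{w}=2d-w$) as an integral of the shifted Cauchy kernels against $d\sigma(w)\cdot f(w)$, apply the octonionic Cauchy integral formula so that the $n=0$ term reproduces $f$ while the $n\neq 0$ terms vanish because their singularities lie outside $\overline{{\cal{S}}}$, and conclude for general $f\in B^2({\cal{S}})$ by the standard stretching argument. Your write-up is, if anything, more explicit than the paper's about the residual volume term produced by the integration by parts and about checking that the kernel itself belongs to $B^2({\cal{S}})$.
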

\begin{proof}
Suppose first that $f$ is some function being left $\mathbb{O}$-regular in a neighborhood of ${\cal{S}}$. Then we have
\begin{eqnarray*}
& & \frac{3}{\pi^4} \int\limits_{w \in {\cal{S}}} \overline{B_{\cal{S}} (w,z)} f(w) dV(w)\\
&=& \frac{3}{\pi^4} \int\limits_{w \in {\cal{S}}} (-2) \frac{\partial }{\partial w_0} \Bigg\{
\sum\limits_{n=-\infty}^{+\infty} \frac{\overline{z}+w+2dn}{|\overline{z}+w+2dn|^8}
\Bigg\} dV(w) f(w)\\
&=& \frac{3}{\pi^4} \Bigg( \int\limits_{w \in {\cal{S}},w_0=0} - \int\limits_{w \in {\cal{S}},w_0=d} \Bigg)
\Bigg(\sum\limits_{n=-\infty}^{+\infty} \frac{\overline{z}+w+2dn}{|\overline{z}+w+2dn|^8}    \Bigg) dS(w) f(w)
\end{eqnarray*} 
where we have applied partial integration with respect to $w_0$. In view of $w=-\overline{w}$ on $w_0=0$ and $w = -\overline{w}+2d$, the latter integrals can be re-written in the following equivalent way
\begin{eqnarray*}
& & \frac{3}{\pi^4} \Bigg( \int\limits_{w \in {\cal{S}},w_0=0} - \int\limits_{w \in {\cal{S}},w_0=d} \Bigg)
\Bigg(\sum\limits_{n=-\infty}^{+\infty} \frac{\overline{z}-\overline{w}+2dn}{|\overline{z}-\overline{w}+2dn|^8}    \Bigg) dS(w) f(w)\\
&=& - \frac{3}{\pi^4} \int\limits_{\partial {\cal{S}}} \Bigg( \sum\limits_{n=-\infty}^{+\infty} \frac{\overline{z}-\overline{w}+2dn}{|\overline{z}-\overline{w}+2dn|^8} \Bigg)\cdot (d\sigma(w)\cdot f(w)),
\end{eqnarray*}
since again $d\sigma(w) = \left\{ \begin{array}{rl} -e_0 dS, & {\rm for}\;w_0 = 0\\ e_0 dS, & {\rm for}\;w_0 = d    \end{array} \right.$. So, the latter integral equals 
$$
\frac{3}{\pi^4} \int\limits_{\partial {\cal{S}}} \frac{\overline{w-z}}{|w-z|^8} \cdot (d\sigma(w)\cdot f(w)) 
- \underbrace{\frac{3}{\pi^4} \int\limits_{\partial {\cal{S}}} \Bigg( \sum\limits_{n \neq 0}  \frac{\overline{z-w}+2dn}{|z-w+2dn|^8}\Bigg) \cdot (d\sigma(w)\cdot f(w))}_{=0} = f(z), 
$$
where we again applied the octonionic Cauchy integral formula.

The reproduction property for all functions $f \in L^2({\cal{S}}) \cap {\rm Ker}\;{\cal{D}}$ finally follows after applying a standard streching argument. 
\end{proof}
In view of the octonionic trigonometric identities
$$
\csc(z) = \frac{1}{64} \cot(\frac{z}{2}) - \cot(z)
$$
and
$$
128 \cot(2z) = \cot(z) + \cot(z+\frac{\pi}{2})
$$
presented in Section~3, we can deduce the following interesting relation between both kernel functions
$$
B_{{\cal{S}}}(\frac{z}{2},\frac{w}{2}) - B_{{\cal{S}}}(\frac{z+d}{2},\frac{w+d}{2}) = -512 \frac{\partial }{\partial x_0} S_{{\cal{S}}}(z,w).
$$
In the limit case $d \to +\infty$ one re-obtains the previously given formula for the Bergman kernel of the octonionic right half-space, namely
\begin{eqnarray*}
B_{H^{+}}(z,w) &=& -2 \frac{\partial }{\partial x_0} \Bigg\{    
\frac{\overline{z}+w}{|\overline{z}+w|^8} + \sum\limits_{n \neq 0} \underbrace{\frac{\overline{z}+w+2dn}{|\overline{z}+w+2dn|^8}}_{\to 0,\; {\rm if}\;d \to +\infty}\Bigg\}\\
&=& -2 \frac{\partial }{\partial x_0} \Bigg\{    
\frac{\overline{z}+w}{|\overline{z}+w|^8}
\Bigg\}.
\end{eqnarray*}
\begin{remark}
In the associative Clifford analysis setting we could even set up formulas for the Bergman kernel for more general rectangular domains bounded in several directions, see \cite{ConKraMMAS}. However, when switching from $d\sigma(z)$ to $dS(z)$ the non-associativity provides us with a further obstacle when the unit normal is not a real number (which luckily is the case in the particular strip domains treated in this paper). If we want to address this more general context, then new ideas are required. Candidates for the reproducing kernels then are the multiperiodic variants of the cotangent functions where the summation is extended over a multi-dimensional lattices as roughly indicated at the end of Section~3.  
\end{remark}

\end{document}